\theoremstyle{plain}
\newtheorem{theorem}{Theorem}[section]
\newtheorem{lemma}{Lemma}[section]
\newtheorem{prop}{Proposition}[section]
\newtheorem{cor}{Corollary}[section]
\newtheorem{Question}{Question}
\theoremstyle{definition}
\newtheorem{remark}{Remark}[section]
\newtheorem*{acknowledgement}{Acknowledgements}
\newcommand{\disk}{\mathbb{D}}
\newcommand{\comment}[1]{}
\DeclareMathOperator{\coker}{\ensuremath{coker}}
\begin{document}
\title{Invertibility of Toeplitz operators with polyanalytic symbols}
\author{Akaki Tikaradze}
\email{ tikar06@gmail.com}
\address{University of Toledo, Department of Mathematics \& Statistics, 
Toledo, OH 43606, USA}
\begin{abstract}
For a class of continuous functions  including complex polynomials in $z,\bar{z},$ we show that
the corresponding Toeplitz operator on the Bergman space of the unit disc
can be expressed as a quotient of certain differential operators with holomorphic coefficients. 
This enables us to obtain several nontrivial operator theoretic results about such Toeplitz operators, 
including a new criterion for invertibility of a Toeplitz operator for a class of harmonic symbols.

\end{abstract}

\maketitle

\section{Introduction}

Throughout $\mathbb{D}=\lbrace z\in \mathbb{C}:|z|<1\rbrace$ will denote the unit disc and
$A^2(\disk)$ will denote the Bergman space of square integrable holomorphic functions on $\mathbb{D}$
with respect to the normalized Lebesgue measure.
Also, by $H(\mathbb{D})$ will denote the Frechet space of all holomorphic functions on $\mathbb{D}$, while $H(\overline{\mathbb{D}})$
denotes the set of all holomorphic functions defined on a neighbourhood of $\overline{\mathbb{D}}.$
 given a bounded measurable function $g\in L^{\infty}(\mathbb{D})$, we will denote by $T_g:A^2(\mathbb{D})\to A^2(\mathbb{D})$
the corresponding Toeplitz operator. Recall its definition: $T_g(f)=P(gf),$ where $P:L^2(\mathbb{D})\to A^2(\mathbb{D})$
denotes the orthogonal projection.
Let $\phi\in H(\overline{\mathbb{D}})[\bar{z}]$ be a polyanalytic function on $\overline{\mathbb{D}}.$ We are interested in the question of invertibility of
the corresponding Toeplitz operator $T_{\phi}:A^2(\mathbb{D})\to A^2(\mathbb{D}).$
More generally we are interested in determining dimensions of its kernel and cokernel.
Similar problem in the setting of the Hardy space $H^2(\mathbb{D})$ is well understood.
Indeed, recall that a well-known theorem of Coburn asserts that given any $g\in L^{\infty}(\mathbb{D})$, then the corresponding
Toeplitz operator $T_g:H^2(\mathbb{D})\to H^2(\mathbb{D})$ is either injective, or its conjugate $T_{\bar{g}}=T_g^*$ is injective.
We will recall the following refinement of the Coburn's theorem for symbols that are continuous up to the boundary of
$\mathbb{D}$ (such symbols will be the object of our main interest.)

\begin{theorem}\label{Douglas}(\cite{D}). Let $g\in C(\bar{\mathbb{D}}).$ 
 then $T_g:H^2(\mathbb{D})\to H^2(\mathbb{D})$ is a Fredholm operator
if and only if  $g$ does not vanish on $\partial \mathbb{D}.$
Let $n$ be the
the winding number of $g(\partial \mathbb{D})$ around $0.$
If  $n>0,$ then $T_g:H^2(\mathbb{D})\to H^2(\mathbb{D})$ is onto with $n$-dimensional kernel. If $n<0$ then
$T_g$ is injective with $n$-dimensional cokernel. Finally $T_g$ is invertible if $n=0.$
\end{theorem}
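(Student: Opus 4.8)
The plan is to recall the two classical ingredients that lie behind this statement: \emph{Coburn's lemma}, which controls one-sided injectivity, and the \emph{symbol calculus of the Toeplitz algebra}, which yields both the Fredholm criterion and the value of the index. Since the Hardy-space operator $T_g$ depends only on the boundary values $g|_{\partial\mathbb{D}}$, I would first regard $g$ as an element of $C(\partial\mathbb{D})$ and work with its Fourier expansion on the circle, where $H^2$ is the closed span of $\{z^n:n\ge 0\}$ and $(H^2)^\perp=\overline{zH^2}$ is the span of $\{z^n:n<0\}$. Throughout I use $T_g^*=T_{\bar g}$.

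The first step is Coburn's lemma: if $g$ is not a.e. zero, then at least one of $\ker T_g$ and $\ker T_g^*$ is trivial. To see this, take nonzero $f\in\ker T_g$ and $k\in\ker T_{\bar g}$. The conditions $P(gf)=0$ and $P(\bar g k)=0$ say that $gf$ and $\bar g k$ lie in $(H^2)^\perp=\overline{zH^2}$, i.e. both have Fourier support in $\{n\le -1\}$. One then examines the single product $W=gf\,\bar k$: reading it as $(gf)\cdot\bar k$ shows $W\in\overline{H^1_0}$ (support in $\{n\le -1\}$), while reading it as $f\cdot(g\bar k)$, using that $\overline{\bar g k}=g\bar k$ has support in $\{n\ge 1\}$, shows $W\in H^1_0$ (support in $\{n\ge 1\}$). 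Hence $W=0$, and since $g\ne 0$ a.e. this forces $f\bar k=0$ a.e., contradicting the fact that a nonzero $H^2$ function has zero set of measure zero. This yields Coburn's dichotomy.

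The substantive step is the Fredholm criterion together with the index. The key technical input is that for continuous $g$ the Hankel operator $H_g$ is compact, which makes each semicommutator $T_fT_g-T_{fg}$ compact for $f,g\in C(\partial\mathbb{D})$; consequently $g\mapsto T_g+\mathcal{K}$ descends to a $*$-homomorphism from $C(\partial\mathbb{D})$ into the Calkin algebra. Invertibility there is equivalent to invertibility of $g$ in $C(\partial\mathbb{D})$, i.e. to $g$ having no zeros on $\partial\mathbb{D}$, which is exactly the asserted Fredholm criterion. For the index I would invoke homotopy invariance of the Fredholm index: a nonvanishing $g\in C(\partial\mathbb{D})$ is homotopic, through nonvanishing continuous functions, to $z^n$ where $n$ is the winding number of $g(\partial\mathbb{D})$ about $0$, so $\operatorname{ind}T_g=\operatorname{ind}T_{z^n}$, and $T_{z^n}$ is a power of the unilateral shift (or of its adjoint), whose index is computed directly.

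Finally I would combine the two halves. Knowing the Fredholm index of $T_g$ together with Coburn's lemma (that one of $\ker T_g$, $\ker T_g^*$ vanishes) pins down the two dimensions exactly: when the winding number is $0$ the index is $0$, and the dichotomy forces both kernel and cokernel to vanish, so $T_g$ is invertible; when it is nonzero, the sign of the index selects which of $T_g$, $T_g^*$ is injective, and its magnitude gives the dimension of the complementary space, producing the stated surjectivity or injectivity with kernel or cokernel of the appropriate dimension. I expect the main obstacle to be the index computation—concretely, the compactness of Hankel operators with continuous symbols and the reduction to the model operators $T_{z^n}$—whereas Coburn's lemma and the concluding bookkeeping are comparatively routine.
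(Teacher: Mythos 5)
This theorem is not proved in the paper at all---it is quoted from Douglas \cite{D}---and your proposal reconstructs essentially the proof from that source: Coburn's dichotomy via the Fourier-support argument on $gf\bar k$, compactness of Hankel operators with continuous symbol giving a $*$-homomorphism of $C(\partial\mathbb{D})$ into the Calkin algebra, the Fredholm criterion from invertibility of the symbol, and the index via homotopy to $z^n$. Two steps you gloss over are standard but should be said: the ``only if'' direction of the Fredholm criterion needs that no nonzero Toeplitz operator is compact (so the symbol map is injective modulo compacts) together with spectral permanence, i.e.\ invertibility in the $C^*$-subalgebra $\mathcal{T}/\mathcal{K}$ is the same as invertibility in the Calkin algebra; and in Coburn's lemma one only gets that $f\bar k$ vanishes on a set of positive measure (where $g\neq 0$), which already contradicts the fact that nonzero $H^2$ functions are nonzero a.e.

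The one genuine problem is the final bookkeeping, which you assert ``produc[es] the stated'' conclusions without checking signs; under the standard (counterclockwise) winding convention it produces the \emph{opposite} of the statement as printed. Your argument gives $\operatorname{ind}T_g=\operatorname{ind}T_{z^n}=-n$, so for $n>0$ Coburn's lemma forces $\ker T_g=0$ and $\dim\ker T_g^{*}=n$: that is, $T_g$ is injective with $n$-dimensional cokernel, whereas the theorem as printed claims it is onto with $n$-dimensional kernel. The test case $g=z$ settles which is right under the usual convention: its winding number is $+1$, and $T_z$ is the unilateral shift, injective and not surjective. The discrepancy is an orientation issue in the paper itself: its statement of this theorem matches the classical result only if the winding number is taken with the clockwise orientation, which is the convention the paper implicitly uses later (its argument-principle claim that $\tilde{\phi}$ has $n-m$ zeros when $\phi(\partial\mathbb{D})$ has winding number $m$ forces that convention), even though its Lemma \ref{Index} (``index equals minus the winding number'') reverts to the standard one. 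So: your proof outline is correct, but you must finish it by computing the index explicitly and stating which orientation convention is in force; as written, the claim that the computation matches the printed statement is false under the convention your own computation uses.
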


We recall the following  well-known partial analogue of this statement in the Bergman space
setting (see for example [\cite{SZ1}, Theorem 24].) It provides the full description of all Fredholm
operators of the form $T_g, g\in g\in C(\bar{\mathbb{D}}).$

\begin{lemma}\label{Index}
If $g\in C(\bar{\mathbb{D}})$, then $T_g:A^2(\mathbb{D})\to A^2(\mathbb{D})$ is a Fredholm operator
if and only if  $g$ does not vanish on $\partial \mathbb{D},$ in this case its index
 equals to minus of the
the winding number of $g(\partial \mathbb{D})$ around $0.$

\end{lemma}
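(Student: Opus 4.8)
The plan is to handle $T_g$ modulo the ideal $\mathcal K$ of compact operators via a boundary symbol calculus, and then to extract the index from a homotopy argument. Everything rests on two standard facts about Toeplitz operators with continuous symbols on $A^2(\mathbb{D})$: first, that the semicommutator $T_fT_g-T_{fg}$ is compact for all $f,g\in C(\bar{\mathbb{D}})$ (a consequence of the compactness of the relevant Hankel operators), and second, that $T_g$ is compact if and only if $g$ vanishes identically on $\partial\mathbb{D}$. Granting these, the map $g\mapsto T_g+\mathcal K$ is an algebra homomorphism from $C(\bar{\mathbb{D}})$ into the Calkin algebra whose kernel is precisely the ideal of symbols vanishing on $\partial\mathbb{D}$; since $T_g^*=T_{\bar g}$, it is moreover a $*$-homomorphism, and it therefore descends to an isometric embedding of $C(\partial\mathbb{D})$ into the Calkin algebra sending the class of $g|_{\partial\mathbb{D}}$ to the class of $T_g$.

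The Fredholm criterion now follows formally. The image of $C(\partial\mathbb{D})$ is a unital $C^*$-subalgebra of the Calkin algebra, so by spectral permanence the class of $T_g$ is invertible in the Calkin algebra if and only if $g|_{\partial\mathbb{D}}$ is invertible in $C(\partial\mathbb{D})$, that is, if and only if $g$ has no zeros on $\partial\mathbb{D}$. By Atkinson's theorem this is exactly the condition for $T_g$ to be Fredholm.

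For the index I would argue that $g\mapsto \mathrm{index}(T_g)$ and $g\mapsto \mathrm{wind}(g|_{\partial\mathbb{D}})$ agree up to a universal sign. Multiplicativity of the Fredholm index together with the congruence $T_{fg}\equiv T_fT_g \ (\mathrm{mod}\ \mathcal K)$ gives $\mathrm{index}(T_{fg})=\mathrm{index}(T_f)+\mathrm{index}(T_g)$ whenever $f,g$ are nonvanishing on $\partial\mathbb{D}$, so the index defines a homomorphism from the multiplicative group of such symbols to $\mathbb Z$; being invariant under the norm-continuous perturbation $g\mapsto T_g$, it is a homotopy invariant and hence factors through $\pi_0$ of this group, which the winding number identifies with $\mathbb Z$. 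Thus $\mathrm{index}(T_g)=c\cdot \mathrm{wind}(g|_{\partial\mathbb{D}})$ for some constant $c$. Evaluating on $g=z$, where $T_z$ is simply multiplication by $z$ (injective, with one-dimensional cokernel spanned by the constants, hence of index $-1$) while $z$ has winding number $+1$, forces $c=-1$, which is the asserted formula.

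The genuine work, and the only input specific to the Bergman space, is the compactness characterization $T_g\in\mathcal K\iff g|_{\partial\mathbb{D}}=0$, whose forward implication is the delicate direction. I would establish it by showing that the Berezin transform $\widetilde{g}$ tends to $0$ as $|z|\to1$ when $g$ vanishes on $\partial\mathbb{D}$ and invoking the standard Berezin-transform compactness criterion on $A^2(\mathbb{D})$; alternatively one can first approximate $g$ uniformly by symbols supported in a compact subset of $\mathbb{D}$, for which $T_g$ is visibly compact, and pass to the limit. The converse implication and the semicommutator estimate then follow from the same Hankel-operator calculus. Since the statement is classical, in the paper itself one may simply invoke [\cite{SZ1}, Theorem 24] rather than reproduce this argument.
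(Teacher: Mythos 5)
Your argument is correct, but note that the paper does not prove Lemma \ref{Index} at all: it is stated as a known result with a citation to [\cite{SZ1}, Theorem 24], so there is no ``paper proof'' to compare against. What you have written is a sound reconstruction of the standard $C^*$-algebraic argument behind such citations: the semicommutator compactness $T_fT_g-T_{fg}\in\mathcal K$ plus the characterization $T_g\in\mathcal K\iff g|_{\partial\mathbb{D}}=0$ give a $*$-monomorphism $C(\partial\mathbb{D})\to\mathcal B(A^2)/\mathcal K$, spectral permanence and Atkinson yield the Fredholm criterion, and additivity plus homotopy invariance of the index reduce the index formula to the single computation $\mathrm{index}(T_z)=-1$, which you do correctly (the range of $M_z$ is the closed subspace $\{f: f(0)=0\}$, with orthogonal complement spanned by the reproducing kernel at $0$, i.e.\ the constants). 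Two small points to tighten. First, your labeling of the ``delicate direction'' is inverted relative to what you then prove: what the injectivity of the embedding requires is $T_g\in\mathcal K\Rightarrow g|_{\partial\mathbb{D}}=0$, and that direction comes not from the Hankel calculus but from testing against the normalized reproducing kernels $k_w$ (which tend to $0$ weakly as $|w|\to 1$) together with the fact that the Berezin transform of $g\in C(\bar{\mathbb{D}})$ has the same boundary behavior as $g$; the direction $g|_{\partial\mathbb{D}}=0\Rightarrow T_g\in\mathcal K$ is the easy one, best done by your approximation-by-compactly-supported-symbols argument rather than by invoking the (much harder) Axler--Zheng-type criterion. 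Second, in the homotopy step the symbols live on $\bar{\mathbb{D}}$ while the homotopy classes live on $\partial\mathbb{D}$, so you should say explicitly that a homotopy of nonvanishing boundary symbols extends (Tietze) to a sup-norm-continuous path in $C(\bar{\mathbb{D}})$, each member nonvanishing on $\partial\mathbb{D}$; since the index depends only on $g|_{\partial\mathbb{D}}$ (by the compactness fact) and is locally constant in the operator norm, the factorization through $\pi_0\cong\mathbb Z$ then goes through as you claim.
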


The full analogue of Theorem \ref{Douglas} in the Bergman space setting fails even for harmonic functions: 
Sundberg and Zheng \cite{SZ}
constructed an example $g=\bar{z}+\phi, \phi\in H(\overline{\mathbb{D}}$) such that
$T_g$ is not invertible while the winding number of $g(\partial \mathbb{D})$ around 0
is 0.

Determining dimensions of $\ker(T_g),\coker(T_g)$ for general classes of harmonic symbols $g$ (in the Bergman setting)
 is a fundamental problem,
full solution to which in seems to be out of reach at the moment.
We will also recall that for real harmonic function $h, T_h$ is invertible if $h(\mathbb{D}$) is bounded away from zero
by Mcdonald-Sundberg \cite{MS}.

Let us start by recalling results of Sundberg and Zheng \cite{SZ} in more detail. They made a crucial observation that
$$T_{\bar{z}}(f)=\frac{1}{z^2}\int_0^z wf'(w)dw.$$
Based on this it is easy to deduce that given $g=\bar{z}+\phi$ then $T_{g}(f)=0$ if and only
if $f$ satisfies the following first order differential equation
$$(1+z\phi(z))f'(z)=-(2\phi(z)+z\phi'(z))f(z).$$
Thus $T_g$ is invertible iff 
$$res_w\frac{2\phi(z)+z\phi'(z)}{1+z\phi(z)}\in \mathbb{Z}_{\leq 0}.$$
This observation led Sundberg and Zheng to a construction of a rational function $\phi(z)$ with poles outside $\overline{\mathbb{D}},$
such that $g=\bar{z}+\phi(z)$ has the property that $T_g$ is a Fredholm operator of index 0, but $\ker(T_g)$ (hence $\coker(T_g))$
is nontrivial. Moreover 0 is an isolated element of the spectrum of $T_g$  [\cite{SZ}, Theorem 2.3, Lemma  2.2].

 We will use the following notation/convention to state our main results.
 Given an $n$-th order polyanalytic function
$\phi(z)=\sum_{i=0}^n a_i(z)\bar{z}^i, a_i\in H(\mathbb{D}),$ we will define the following holomorphic function
$\tilde{g}$ as follows:
$$\tilde{g}(z)=\sum a_i(z)z^{n-i}.$$
The crucial relation between $\phi$ and $\tilde{\phi}$ is that $\phi(z)=z^{-n}\tilde{\phi}(z)|_{\partial \mathbb{D}}.$
Therefore it follows from the argument principle that (assuming $\phi\in C(\overline{\mathbb{D}})$
 if the winding number of $\phi(\partial \mathbb{D})$ around 0 is $m,$ 
then $\tilde{\phi}$ has $n-m$ zeros
on $\mathbb{D}.$

Now we will state  our main results. Given an $n$-th order polyanalytic
function $\phi(z)=\sum_{i=0}^n a_i(z)\bar{z}^i, a_i(z)\in H(\mathbb{D}),$
 we will define the following $n$-th order differential operator 

$$D_{\phi}=\prod_{i=2}^{n+1}(zD+i)a_0(z)+\sum_{i=1}^n\prod_{k=i+2}^{n+1}(zD+k)D^ia_i(z), D=\frac{\partial}{\partial z}.$$

The following is the key result.

\begin{lemma}\label{key}
Let $f, g\in A^2(\mathbb{D}).$ Put $\phi(z)=\sum_{i=1}^n a_i(z)\bar{z}^i, a_i(z)\in H^{\infty}(\mathbb{D}).$
Then $T_{\phi}(f)=g$ if and only if 
$\prod_{i=2}^{n+1}(zD+i)(g)=D_{\phi}(f).$ In particular $T_{\phi}(f)=0$ if and only if $D_{\phi}(f)=0.$

\end{lemma}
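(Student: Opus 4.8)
The plan is to start from the explicit integral formula for the basic Toeplitz operator $T_{\bar{z}}$ and build up the general case by analyzing how $T_{\bar{z}^i}$ acts, then assembling the pieces. The key computational input is the identity $T_{\bar{z}}(f)=\frac{1}{z^2}\int_0^z wf'(w)\,dw$ recalled in the introduction. Differentiating this twice (or equivalently multiplying by $z^2$ and differentiating) converts the integral operator into a differential one: one checks that $(zD+2)(zD+3)\bigl(T_{\bar z}(f)\bigr)$, or the appropriate product of factors $(zD+i)$, strips away the $z^{-2}$ and the integral, leaving something like $D\bigl(zf\bigr)$ or $zf'+\text{(lower order)}$. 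This is the prototype for the factors $\prod_{i=2}^{n+1}(zD+i)$ appearing on the left-hand side of the claimed identity.

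\smallskip

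First I would establish the single-power formula: compute $T_{\bar{z}^i}(f)$ explicitly for each $i$, expecting an expression of the form $z^{-i-1}\int_0^z w^i f'(w)\,dw$ (up to a constant), by iterating or directly integrating against the reproducing kernel. Second, I would clear denominators: applying a suitable product of first-order operators $(zD+k)$ to $z^{-i-1}(\cdots)$ eliminates both the negative power of $z$ and the integral sign, producing a genuine differential operator $D^i$ acting on $a_i(z)f$ up to the telescoping product factors. This is precisely where the operator $D_\phi=\prod_{i=2}^{n+1}(zD+i)a_0(z)+\sum_{i=1}^n\prod_{k=i+2}^{n+1}(zD+k)D^i a_i(z)$ should emerge, with the product factors arranged so that every summand, after applying the common operator $\prod_{i=2}^{n+1}(zD+i)$ on the left, matches. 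Third, by linearity I would sum over $i$ with coefficients $a_i(z)$ (which commute with the integral in the appropriate sense since they are holomorphic), and rearrange so that a single differential operator $\prod_{i=2}^{n+1}(zD+i)$ applied to $g=T_\phi(f)$ equals $D_\phi(f)$.

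\smallskip

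The main obstacle I anticipate is the bookkeeping of the operator factors $(zD+k)$: one must verify that $\prod_{i=2}^{n+1}(zD+i)$ applied to $z^{-i-1}\int_0^z w^i f'(w)\,dw$ really collapses to the stated $i$-th summand of $D_\phi$, with exactly the product range $\prod_{k=i+2}^{n+1}(zD+k)$ and the operator $D^i$ in the right place. The noncommutativity of $zD$ with multiplication by $z$ (i.e. $[zD, z]=z$, equivalently $(zD+k)z = z(zD+k+1)$) is what forces the index shifts in the product ranges, and getting these shifts correct is the delicate step. A clean way to organize this is to note the intertwining relation $(zD+k)\,z^{-1} = z^{-1}(zD+k-1)$ and $(zD+k)\int_0^z = z^{-1}\cdots$, and to peel off factors one at a time, tracking how each factor interacts with the integral and with the power $z^{-i-1}$.

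\smallskip

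Once the single-power identity is pinned down, the final step is purely formal: the factor $\prod_{i=2}^{n+1}(zD+i)$ on the left of the claimed equation is the "least common" differential operator that simultaneously clears all the denominators $z^{-i-1}$ for $0\le i\le n$, and the equality $\prod_{i=2}^{n+1}(zD+i)(g)=D_\phi(f)$ follows by summing the cleared single-power identities. The last sentence, that $T_\phi(f)=0$ iff $D_\phi(f)=0$, then follows because $\prod_{i=2}^{n+1}(zD+i)$ is injective on $H(\mathbb{D})$: its kernel would consist of holomorphic solutions of an Euler-type equation $\prod(zD+i)(g)=0$, whose only solutions are combinations of $z^{-i}$ for $i\in\{2,\dots,n+1\}$, none of which lie in $H(\mathbb{D})$ (let alone $A^2(\mathbb{D})$) except the zero function. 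I would record this injectivity as a short remark to close the argument.
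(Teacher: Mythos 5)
Your overall architecture---reduce to the single powers $\bar z^{\,i}$, clear denominators by applying $(zD+k)$ factors, track the index shifts through the commutation relations, and finish by injectivity of $\Lambda=\prod_{i=2}^{n+1}(zD+i)$ on $H(\mathbb{D})$---is essentially the paper's proof, and your justification of the injectivity of $\Lambda$ (no nonzero combination of $z^{-2},\dots,z^{-(n+1)}$ is holomorphic at the origin) makes explicit a point the paper only asserts. However, the computational core of your plan is false as stated: the ``single-power formula'' $T_{\bar z^i}(f)=c\,z^{-i-1}\int_0^z w^i f'(w)\,dw$ fails for every $i\ge 2$. Test it on monomials: the correct action, which is the paper's starting point, is $T_{\bar z^i}(z^m)=\frac{m-i+1}{m+1}\,z^{m-i}$ for $m\ge i$ and $0$ for $m<i$, so $T_{\bar z^i}$ lowers degree by $i$; your proposed formula sends $z^m$ to $\frac{c\,m}{m+i}\,z^{m-1}$, lowering degree by $1$ regardless of $i$, and no choice of constant $c$ can repair that. (Your warm-up claim is also off: for $i=1$ a single factor suffices, with $(zD+2)T_{\bar z}f=f'$ exactly, not $D(zf)$ or $zf'+\cdots$.) If you ran the denominator-clearing step with your formula, you would produce a differential operator different from $D_\phi$ and the lemma would not come out.

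The repair stays inside your framework. Since $\bar z$ is antiholomorphic, $P(\bar z\,v)=0$ whenever $v\perp A^2(\mathbb{D})$, hence $T_{\bar z^i}=(T_{\bar z})^i$; iterating $T_{\bar z}=(zD+2)^{-1}D$ and pushing inverses to the left with $D(zD+k)^{-1}=(zD+k+1)^{-1}D$ gives the correct identity $T_{\bar z^i}=\prod_{k=2}^{i+1}(zD+k)^{-1}D^i$ (equivalently, verify this on monomials, as the paper does). Two further points you should make explicit. First, the coefficients enter as $T_{a_i\bar z^i}=T_{\bar z^i}M_{a_i}$, i.e.\ $P(a_i\bar z^i f)=P(\bar z^i(a_i f))$ because $a_i f$ is still holomorphic; so $a_i$ sits in the innermost position, to the right of $D^i$, exactly as it does in $D_\phi$---it does not ``commute with the integral.'' Second, since the operator identities are verified on polynomials, you need a short density argument to pass to all $f\in A^2(\mathbb{D})$: polynomials are dense in $A^2(\mathbb{D})$, $T_\phi$ is bounded, norm convergence in $A^2(\mathbb{D})$ implies local uniform convergence, and $\Lambda$ and $D_\phi$ are continuous on $H(\mathbb{D})$. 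With these corrections your argument closes and coincides with the paper's.
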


This result allows us to transfer the problems about the kernel of Toeplitz operators
with polyanalytic symbols (in particular questions about their invertibility) to the problems about existence of
solutions of holomorphic ordinary differential equations. 
The key for proving the above result will be to explicitly realize Toeplitz operators with polyanalytic symbols
as a fraction of differential operators with analytic coefficients. This enables us to embed the algebra
generated of Toeplitz operators with polyanalytic symbols into a skew field of analytic differential operators
on $D$. As an immediate corollary we obtain the following.

\begin{prop}
The algebra generated by all Toeplitz operators with polyanalytic symbols
has no zero divisors.
\end{prop}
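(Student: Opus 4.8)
The plan is to upgrade the operator identity of Lemma \ref{key} into an embedding of the algebra of polyanalytic Toeplitz operators into a division ring; the conclusion is then immediate, since any subring of a division ring is free of zero divisors. Write $\mathcal{T}$ for the algebra generated by all $T_\phi$ with $\phi$ polyanalytic, and set $E=zD$. For a symbol of order $n$ the factor $P_n:=\prod_{i=2}^{n+1}(zD+i)$ is a polynomial in $E$, and Lemma \ref{key} reads, as an identity of operators on holomorphic functions, $P_n T_\phi = D_\phi$, that is $T_\phi=P_n^{-1}D_\phi$. Here $D_\phi$ has holomorphic coefficients while $P_n$ is diagonal in the monomial basis with eigenvalue $\prod_{i=2}^{n+1}(k+i)\neq 0$ on $z^k$, hence invertible on $H(\mathbb{D})$.

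First I would identify the ambient division ring. Let $\mathcal{D}=M(\mathbb{D})[D;d/dz]$ be the ring of differential operators with meromorphic coefficients, an Ore extension of the field $M(\mathbb{D})$ by $d/dz$. Comparing top-order coefficients shows $\mathcal{D}$ is a domain, and the division algorithm in the order of $D$ (whose leading coefficient lies in the field $M(\mathbb{D})$) makes $\mathcal{D}$ a left and right Euclidean domain, hence a principal ideal domain, hence a left and right Ore domain. Therefore $\mathcal{D}$ embeds into its skew field of fractions $\mathcal{Q}$, a division ring. Since $D_\phi,P_n\in\mathcal{D}$ and $P_n\neq 0$, the element $P_n^{-1}D_\phi$ is defined in $\mathcal{Q}$.

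Next I would build an injective algebra homomorphism $\Phi\colon\mathcal{T}\to\mathcal{Q}$ with $\Phi(T_\phi)=P_n^{-1}D_\phi$, using $W=H(\mathbb{D})$ as a common faithful module. Each $\tilde T_\phi:=P_n^{-1}D_\phi$ is a continuous operator on $W$ that agrees with $T_\phi$ on $A^2(\mathbb{D})$ by Lemma \ref{key}. Because each $T_\phi$ preserves $A^2(\mathbb{D})$ and coincides there with $\tilde T_\phi$, any word $T_{\phi_1}\cdots T_{\phi_r}$ agrees on the polynomials $\mathbb{C}[z]$ with the corresponding word $\tilde T_{\phi_1}\cdots\tilde T_{\phi_r}$, by induction on $r$ (each intermediate output stays in $A^2(\mathbb{D})$, where the two factors are equal); extending linearly, every $A\in\mathcal{T}$ satisfies $A|_{\mathbb{C}[z]}=\tilde A|_{\mathbb{C}[z]}$, where $\tilde A$ is the associated operator built from the $\tilde T_\phi$. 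Since $\mathbb{C}[z]$ is dense in $A^2(\mathbb{D})$ and the $T_\phi$ are bounded, the assignment $A\mapsto\tilde A$ is well defined, multiplicative and injective: if $\tilde A=0$ on $W$ then $A$ vanishes on $\mathbb{C}[z]$, hence $A=0$. This realizes $\mathcal{T}$ as the concrete operator algebra generated by the fractions $P_n^{-1}D_\phi$ on $W$.

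The main obstacle is to match this concrete operator algebra with the formal subalgebra of $\mathcal{Q}$ generated by the same symbols, i.e.\ to show that the evaluation of abstract fractions as operators on $W$ is injective, so that no operator relation holds beyond those already forced inside $\mathcal{Q}$. Concretely, one must place a finite product $\prod_j P_{n_j}^{-1}D_{\phi_j}$ over a common left denominator $P^{-1}Q$ with $P\in\mathbb{C}[E]\setminus\{0\}$ and $Q\in\mathcal{D}$, and confirm that this normal form computes the same operator on $W$; injectivity then follows from faithfulness of $\mathcal{D}$ on $W$, since $P^{-1}Q$ acts as zero only if $Q=0$. This is exactly a left Ore computation for the multiplicative set generated by the $P_n$, and it is tractable precisely because all denominators lie in the commutative subalgebra $\mathbb{C}[E]=\mathbb{C}[zD]$ and $\ad_E$ strictly lowers the order in $D$ (the principal symbols commute), so the Euler factors can be commuted past the $D_\phi$ with lower-order corrections that terminate. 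Once this bookkeeping is in place, $\Phi$ is an embedding into the division ring $\mathcal{Q}$, and $\mathcal{T}$, being a subring, has no zero divisors.
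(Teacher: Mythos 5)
Your first two paragraphs are sound (and usefully more explicit than the paper, which treats this proposition as an immediate consequence of Lemma \ref{key}), and you correctly identified the crux: one must match the concrete algebra of operators on $W=H(\mathbb{D})$ with the formal subalgebra of the skew field $\mathcal{Q}$. But the mechanism you propose for this step fails, and the failure is not repairable bookkeeping: the normal form you need does not exist. Take the simplest nontrivial word $T_{a_0}T_{\bar z}$, where $a_0\in H^{\infty}(\mathbb{D})$ is holomorphic but \emph{not} a polynomial; on $W$ it is realized by $a_0(z)\,(zD+2)^{-1}D$. Your construction requires moving the inner denominator to the left, i.e.\ producing $P\in\mathbb{C}[E]\setminus\{0\}$ and $Q\in\mathcal{D}$ with $a_0\,(zD+2)^{-1}=P^{-1}Q$, which in $\mathcal{Q}$ means exactly the ring identity $P\,a_0=Q\,(zD+2)$ in $\mathcal{D}=M(\mathbb{D})[D]$. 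No such identity exists: both sides act on meromorphic functions on $\mathbb{D}$, the right side annihilates $z^{-2}$ because $(zD+2)z^{-2}=0$, while writing $a_0=\sum_m c_m z^m$ and $P=P(E)$ the left side sends $z^{-2}$ to $\sum_m c_m P(m-2)z^{m-2}$; vanishing forces $P(m-2)=0$ for every $m$ with $c_m\neq 0$, so $P$ has infinitely many roots and $P=0$. Thus the left Ore condition for $\mathbb{C}[E]\setminus\{0\}$ genuinely fails in $H(\mathbb{D})[D]$. The heuristic you offer is also incorrect: $[zD,fD^k]=(zf'-kf)D^k$ has the \emph{same} order in $D$, and on a homogeneous component $u_d$ (degree $d$, where $z$ has degree $1$ and $D$ degree $-1$) one has exactly $P(E)u_d=u_dP(E+d)$; a non-polynomial coefficient has infinitely many homogeneous components, so no single polynomial denominator can absorb all the shifts — nothing terminates.

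Even for polynomial symbols, where the Ore move does exist inside the Weyl algebra, it produces denominators with roots at nonnegative integers (commuting past $z^5$ turns $zD+2$ into $zD-3$, since $(zD-3)z^5=z^5(zD+2)$), and such $P$ are not injective on $W$; then $P^{-1}Q$ is not an operator on $W$ at all, so your final appeal to faithfulness ("$P^{-1}Q$ acts as zero only if $Q=0$") does not link the fraction to the operator: from $Q=0$ you only get $PS=0$ on $W$, which no longer implies $S=0$. So the bridge you singled out as the main obstacle is not built. The proposition is true, and your first two paragraphs can stand, but the last step needs a device that tolerates infinitely many homogeneous components. One that works: every word $S$ in the generators acts on monomials by $S(z^k)=\sum_{d\geq -N} r_d(k)\,z^{k+d}$ with each $r_d$ a rational function of $k$ (the $D_\phi$ contribute polynomials in $k$, each $P_n^{-1}$ a factor $1/P_n(k+d)$); composition of such operators corresponds to multiplication of the series $\sum_d \mu^d r_d(X)$ in the skew Laurent series ring $\mathbb{C}(X)((\mu;\sigma))$ with relation $r(X)\mu=\mu\, r(X+1)$, which is a division ring, and the symbol map is injective because a rational function vanishing at all sufficiently large integers vanishes identically. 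That yields the embedding of $\mathcal{T}$ into a skew field that both you and the paper want, with no Ore condition inside $H(\mathbb{D})[D]$.
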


 The problem of analysing $T_{\phi}$ for $n$-th order polyanalytic functions
can be naturally broken up into studying $\phi$ for which $\tilde{\phi}$ has $m$ zeros in $\mathbb{D}$ for each
nonnegative integer value of $m.$ More specifically, given $w_1, \cdots, w_m\in D$ (not necessarily distinct)
we would like to analyse $\dim \ker T_{\phi}$ for $n$-th order polyanalytic $\phi$ such that zeroes of $\tilde{\phi}$ in $\overline{\mathbb{D}}$
are precisely $w_1,\cdots w_m.$ 
The case of $n=m$ is perhaps the most interesting since this is the case for index $0$ $T_{\phi}.$
A deep connection between operator theoretic properties of $T_{\phi}$ and function theoretic properties
of $\tilde{\phi}$ is highlighted by the fact (to be proven below) that equation $T_{\phi}(y)=0$ is equivalent to
the differential equation $D_{\phi}y=0$ whose singularities are precisely
at zeros of $\tilde{\phi}.$
Our main results provide full answer to this classification question
for $m\leq 1$, as well as for the case of $m=n$ and  $w_1=w_2=\cdots=w_m.$

\begin{theorem}\label{res}
Let $\phi\in H^{\infty}(\mathbb{D})[\bar{z}]$ be an $n$-th order polyanalytic function.
Then the kernel of $T_{\phi}$ is at most $n$-dimensional. 
If $\phi\in H^{\infty}(\overline{\mathbb{D}})[\bar{z}]$ and 
 $\tilde{\phi}$ is nowhere vanishing on $\overline{\mathbb{D}}$, then $T_{\phi}$ is surjective with n-dimensional kernel.
If $\tilde{\phi}$ has a zero $w$ on $\mathbb{D}$ such that 
$$res_w(\widetilde{\frac{\partial\phi}{\partial z}}\tilde{\phi}^{-1})\notin\mathbb{Z}_{\geq n+1},$$
then  the kernel of $T_{\phi}$ is at most $n-1$-dimensional.
If $\phi\in H^{\infty}(\overline{\mathbb{D}})[\bar{z}]$ and $\tilde{\phi}$ has a single zero $w$ on $\mathbb{D}$
 and no zeroes on $\partial \mathbb{D},$ then $T_{\phi}$
is onto if and only if the above condition holds, in which case $\ker T_{\phi}$ is $(n-1)$-dimensional.
\end{theorem}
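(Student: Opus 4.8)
The plan is to use Lemma \ref{key} to replace the operator-theoretic question by the study of holomorphic solutions of the ordinary differential equation $D_\phi y=0$ on $\disk$. The first step is to read off the leading term of $D_\phi$: expanding each summand, the factor $\prod_{i=2}^{n+1}(zD+i)$ contributes $z^n$ to the top-order coefficient while the $i$-th summand $\prod_{k=i+2}^{n+1}(zD+k)D^i a_i$ contributes $z^{n-i}a_i$, so the coefficient of $D^n$ in $D_\phi$ is exactly $\sum_{i=0}^n a_i(z)z^{n-i}=\tilde\phi(z)$. Hence $D_\phi$ is an $n$-th order linear ODE whose singular points are precisely the zeros of $\tilde\phi$. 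Since $\tilde\phi\not\equiv 0$ (the degenerate case $\tilde\phi\equiv 0$, in which $\phi$ vanishes identically on $\partial\disk$, is excluded from the Fredholm analysis), ordinary points exist; restricting a global solution to its germ at a fixed ordinary point $z_0$ gives an injection of $\ker T_\phi$ into the $n$-dimensional space of Cauchy data, which proves the first assertion $\dim\ker T_\phi\le n$. For the second assertion, if $\tilde\phi$ is nonvanishing on $\overline{\disk}$ then it is nonvanishing on a neighbourhood of $\overline{\disk}$, the equation $D_\phi y=0$ is nonsingular there, all $n$ solutions are holomorphic on a neighbourhood of $\overline{\disk}$, and so $\dim\ker T_\phi=n$; surjectivity then follows from Lemma \ref{Index}, since $\phi=z^{-n}\tilde\phi$ is nonvanishing on $\partial\disk$ and the argument principle gives index $n$, forcing $\coker T_\phi=0$.

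The heart of the matter is the local analysis at a zero $w\in\disk$ of $\tilde\phi$, which I take to be simple. There $D_\phi$ has a regular singular point, and I would extract the indicial equation by substituting $y=(z-w)^\rho$ and collecting the lowest power of $(z-w)$. Only the top two coefficients contribute at leading order: the $D^n$ term, through $\tilde\phi'(w)(z-w)$, and the $D^{n-1}$ term, through its value $c_{n-1}(w)$; the resulting indicial polynomial factors as
$$\rho(\rho-1)\cdots(\rho-n+2)\bigl[\tilde\phi'(w)(\rho-n+1)+c_{n-1}(w)\bigr],$$
so that the exponents are $0,1,\dots,n-2$ together with $\sigma=(n-1)-res_w(c_{n-1}\tilde\phi^{-1})$. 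The technical crux is to evaluate the coefficient $c_{n-1}$ of $D^{n-1}$ in $D_\phi$ and to show $res_w(c_{n-1}\tilde\phi^{-1})=2n-res_w(\widetilde{\partial\phi/\partial z}\,\tilde\phi^{-1})$, equivalently $\sigma=res_w(\widetilde{\partial\phi/\partial z}\,\tilde\phi^{-1})-(n+1)$. This requires expanding the products $\prod_{i=2}^{n+1}(zD+i)$ and $\prod_{k=i+2}^{n+1}(zD+k)D^i$ down to order $D^{n-1}$ by means of the commutation rule $(zD+j)z^m=z^m(zD+m+j)$, and then converting residues using the identity $\tilde\phi'=\widetilde{\partial\phi/\partial z}+\sum_i(n-i)a_iz^{n-i-1}$. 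I have verified the required residue identity for $n=1$, where it reduces to $\sigma=res_w(\widetilde{\partial\phi/\partial z}\,\tilde\phi^{-1})-2$ and recovers the Sundberg--Zheng criterion; carrying out the bookkeeping for general $n$ is the step I expect to be the main obstacle.

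Granting the exponent computation, the third assertion is immediate: if $res_w(\widetilde{\partial\phi/\partial z}\,\tilde\phi^{-1})\notin\mathbb{Z}_{\geq n+1}$ then $\sigma\notin\mathbb{Z}_{\geq 0}$, so either $\sigma\notin\mathbb{Z}$ — in which case the monodromy of $D_\phi y=0$ around $w$ has the eigenvalue $e^{2\pi i\sigma}\ne 1$ and the associated local solution is multivalued — or $\sigma$ is a negative integer, in which case the corresponding solution has a pole at $w$. In either case the space of solutions holomorphic at $w$ has dimension at most $n-1$, and since restriction to the germ at $w$ embeds $\ker T_\phi$ into this space, $\dim\ker T_\phi\le n-1$.

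Finally, for the fourth assertion I would combine the local picture with the index. When $\tilde\phi$ has a single zero in $\disk$ and none on $\partial\disk$, $\phi=z^{-n}\tilde\phi$ is nonvanishing on $\partial\disk$, so $T_\phi$ is Fredholm and Lemma \ref{Index} together with the argument principle give index $n-1$; with $\dim\ker T_\phi\le n$ this pins $\dim\ker T_\phi\in\{n-1,n\}$ and makes $T_\phi$ onto if and only if $\dim\ker T_\phi=n-1$. The third assertion supplies one direction. For the converse I must show that when $res_w(\widetilde{\partial\phi/\partial z}\,\tilde\phi^{-1})\in\mathbb{Z}_{\geq n+1}$, i.e.\ $\sigma\in\mathbb{Z}_{\geq 0}$, the kernel is full $n$-dimensional. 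The index already guarantees $n-1$ holomorphic solutions, so it suffices to produce one further solution coming from the exponent $\sigma$; the genuine obstacle here is to rule out a logarithmic (resonant) solution, equivalently to show that the relevant resonance coefficient in the Frobenius recursion vanishes, which I would establish from the special algebraic form of $D_\phi$. This last point, rather than any of the Fredholm bookkeeping, is where I expect the real work to lie.
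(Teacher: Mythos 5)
Your overall strategy is the same as the paper's: reduce to the ODE $D_\phi y=0$ via Lemma \ref{key}, identify $\tilde\phi$ as the leading coefficient, treat zeros of $\tilde\phi$ as regular singular points, extract the indicial equation, and combine with the Fredholm index from Lemma \ref{Index}. Your first two assertions are fine (the paper gets the second even more cheaply: index $n$ together with $\dim\ker T_\phi\le n$ already forces $\dim\coker T_\phi=0$, no extension of solutions across $\overline{\disk}$ needed). But the step you yourself flag as the ``main obstacle'' --- computing the coefficient $c_{n-1}$ of $D^{n-1}$ in $D_\phi$ --- is both missing and incorrectly conjectured. The correct value, which the paper obtains from the expansion $\prod_{k=1}^m(zD+b_k)\psi=\psi z^mD^m+\bigl((m(m-1)/2+\sum b_k)z^{m-1}\psi+mz^m\psi'\bigr)D^{m-1}+\cdots$, is
$$c_{n-1}=(n+1)\tilde\phi'-\widetilde{\frac{\partial\phi}{\partial z}},$$
so that at a simple zero $w$ the exceptional exponent is $\sigma=res_w(\widetilde{\frac{\partial\phi}{\partial z}}\tilde\phi^{-1})-2$, not $res_w(\widetilde{\frac{\partial\phi}{\partial z}}\tilde\phi^{-1})-(n+1)$ as you conjecture. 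Your identity would require $2n=n+1$, i.e.\ it holds only for $n=1$, which is exactly the case you verified. For $n=2$ the discrepancy is visible in the paper's own worked example: for $\phi=a_2\bar z^2+a_1\bar z+a_0(z)$ with $a_1,a_2$ constants, the $D^1$ coefficient is $3\tilde\phi'-z^2a_0'$, whereas your guess gives $4\tilde\phi'-z^2a_0'$.

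This error propagates into the logic of your third assertion. With the correct $\sigma=res_w-2$, the hypothesis $res_w\notin\mathbb{Z}_{\geq n+1}$ only guarantees $\sigma\notin\mathbb{Z}_{\geq n-1}$; it does \emph{not} exclude $\sigma\in\{0,1,\dots,n-2\}$. So your dichotomy (``either $\sigma\notin\mathbb{Z}$, giving nontrivial monodromy, or $\sigma$ is a negative integer, giving a pole'') misses the resonant case where $\sigma$ collides with one of the trivial exponents; the correct argument there is that the number of \emph{distinct} nonnegative-integer indicial roots is still at most $n-1$, and the space of holomorphic germs of solutions at $w$ has dimension at most that number, since a basis can be put in echelon form by leading exponent. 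Finally, for the converse direction of the fourth assertion you correctly identify the logarithmic-resonance issue but leave it unresolved (``which I would establish\dots''); the paper closes this point not by computing resonance coefficients but by citing a Fuchs-type theorem (from Hartman's book): when the coefficients are holomorphic in a neighbourhood of $\overline{\disk}$, every indicial root in $\mathbb{Z}_+$ produces a genuine holomorphic solution with that order of vanishing, which then lies in $A^2(\disk)$ and forces $\dim\ker T_\phi=n$. So as it stands the proposal has two genuine gaps --- the subleading-coefficient computation (with a wrong conjectured answer) and the resonance step --- and these are precisely the two places where the paper does the real work.
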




Our next result provides invertibility criterion for Toeplitz operators $T_{\phi}$
where $\phi$ is an $n$-th order harmonic function such that $\tilde{\phi}$ has
a zero with multiplicity $n$ in $\mathbb{D}.$

\begin{theorem}\label{order}

Let $0\neq w\in \mathbb{D}, \psi\in H^{\infty}(\mathbb{D})$ and
 $\phi=(z-w)^n\psi(z)+(\bar{z}-{w}^{-1})^n.$ Then
Toeplitz operator $T_{\phi}$ is injective if the following equation has no roots in $\mathbb{Z}_{\geq 0}:$
$$\prod_{i=1}^n(\lambda+i+1)+\psi(w)(-w)^n(n!+\sum_{i=1}^n\frac{n!^2}{i!^2(n-i)!}\lambda\cdots(\lambda-i+1))=0.$$
Moreover,  $\dim \ker(T_{\phi})$ equals to the number of distinct roots of the above equation  in $\mathbb{Z}_{+}$ 
 if  $1+(-z)^n\psi(z)$ has no zeroes on $\overline{\mathbb{D}}.$
In particular, if $\psi(w)(-w)^n\notin \mathbb{Q}_{{<-1}}$ and $(-z)^n\psi(z)$ is not equal to $-1$ on $\overline{\mathbb{D}}$,
then $T_{\phi}$ is invertible. Thus if $\psi\in H(\overline{\mathbb{D}})$ such that $(-z)^n\psi(z)\neq -1$ on $\overline{\mathbb{D}}$
and $(-z)^n\psi(z)(\mathbb{D})$ is a starlike domain around 0, then $T_{\phi}$ is invertible for any $w.$
\end{theorem}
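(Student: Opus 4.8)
The plan is to convert the kernel problem into the study of a Fuchsian ODE and read off its exponents at $w$. By Lemma~\ref{key}, $f\in\ker T_\phi$ if and only if $f\in A^2(\mathbb{D})$ and $D_\phi f=0$, so the whole statement is about holomorphic solutions of $D_\phi y=0$. First I would record the shape of $\tilde\phi$: expanding $(\bar z-w^{-1})^n$ and applying the tilde operation gives $\tilde\phi(z)=(z-w)^n\big(z^n\psi(z)+(-1)^nw^{-n}\big)$. Since $\tilde\phi$ is the leading coefficient of $D_\phi$, the point $w$ is a zero of order exactly $n$, so $w$ is a regular singular point of $D_\phi y=0$ whose Frobenius exponents govern the local holomorphic solutions; the remaining singular points are the zeros of $z^n\psi(z)+(-1)^nw^{-n}$, which the stated nonvanishing hypothesis keeps off $\overline{\mathbb{D}}$.

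The heart of the argument, and the main obstacle, is the exact computation of the indicial equation at $w$. Here I would use the closed form $\prod_{i=a+1}^{a+m}(zD+i)=z^{-a}D^{m}z^{m+a}$ to rewrite $D_\phi$, substitute $y=(z-w)^\lambda$, and expand in powers of $z-w$. The delicate feature is that the coefficients of the lowest powers $(z-w)^{\lambda-n},\dots,(z-w)^{\lambda-1}$ vanish identically in $\lambda$: each is a sum $\sum_i\binom ni(-1)^{n-i}p(i)$ with $\deg p<n$, killed by the $n$-th finite difference. The indicial polynomial therefore first appears at order $(z-w)^\lambda$, which is exactly the order at which the term carrying $\psi(w)$ enters; collecting it, and repeatedly using $\lambda(\lambda-1)\cdots(\lambda-i+1)\cdot(\lambda-i)\cdots(\lambda-n+1)=\lambda(\lambda-1)\cdots(\lambda-n+1)$, identifies the indicial equation with the displayed polynomial. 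A clarifying preliminary is the identity $n!+\sum_{i=1}^{n}\frac{(n!)^2}{(i!)^2(n-i)!}\lambda(\lambda-1)\cdots(\lambda-i+1)=\prod_{j=1}^{n}(\lambda+j)$, which recasts the displayed equation as $\prod_{i=2}^{n}(\lambda+i)\cdot\big[(\lambda+n+1)+\beta(\lambda+1)\big]=0$ with $\beta=\psi(w)(-w)^n$. Thus the exponents are the fixed negative integers $-2,\dots,-n$ together with the single $\beta$-dependent exponent $\lambda^\ast=-\frac{n+1+\beta}{1+\beta}$, and $\lambda^\ast$ is the only one that can be a nonnegative integer.

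With the exponents in hand the operator-theoretic conclusions follow. For injectivity: a nonzero $f\in\ker T_\phi$ is holomorphic at $w$, so its order of vanishing there is a nonnegative-integer exponent; since $-2,\dots,-n$ are excluded, this forces $\lambda^\ast\in\mathbb{Z}_{\geq0}$, and conversely if the displayed equation has no root in $\mathbb{Z}_{\geq0}$ then no such $f$ exists. For the dimension count I would invoke the nonvanishing hypothesis to make $w$ the unique singular point of $D_\phi$ in $\overline{\mathbb{D}}$; then when $\lambda^\ast\in\mathbb{Z}_{\geq0}$ it is the largest exponent, its Frobenius solution is genuinely holomorphic (no logarithm, being maximal) and, by the hypothesis, extends to an element of $A^2(\mathbb{D})$, while the exponents $-2,\dots,-n$ produce solutions with poles inside $\mathbb{D}$ that are not in $A^2$. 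Hence $\dim\ker T_\phi$ equals the number of distinct nonnegative-integer roots, which is $1$ or $0$ according as $\lambda^\ast\in\mathbb{Z}_{\geq0}$ or not. Invertibility then comes from Lemma~\ref{Index}: under the hypothesis all $n$ zeros of $\tilde\phi$ sit at $w$, so $\phi(\partial\mathbb{D})$ has winding number $0$ and $T_\phi$ has index $0$; an injective index-$0$ Fredholm operator is invertible.

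Finally, the ``in particular'' and the starlike corollary are elementary once $\lambda^\ast$ is understood. If $\lambda^\ast=m\in\mathbb{Z}_{\geq0}$ then solving for $\beta$ gives $\beta=-\frac{m+n+1}{m+1}$, a rational number strictly less than $-1$; hence $\beta=\psi(w)(-w)^n\notin\mathbb{Q}_{<-1}$ already rules out every nonnegative-integer exponent, giving injectivity, and combined with $(-z)^n\psi(z)\neq-1$ (index $0$) gives invertibility. For the last assertion, set $F(z)=(-z)^n\psi(z)$, so $\beta=F(w)$. If $F(\mathbb{D})$ is starlike about $0$ and $F\neq-1$ on $\overline{\mathbb{D}}$, then $F(w)$ cannot be real and $\le-1$: otherwise the segment $[0,F(w)]\subset F(\mathbb{D})$ would pass through $-1$, contradicting $F\neq-1$. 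Therefore $F(w)\notin\mathbb{Q}_{<-1}$ and $F\neq-1$ for every $w$, and the preceding case applies uniformly in $w$. Beyond the indicial computation, the one remaining point demanding care is verifying square-integrability of the $\lambda^\ast$-solution, which is exactly where the absence of other zeros of $\tilde\phi$ in $\overline{\mathbb{D}}$ is used.
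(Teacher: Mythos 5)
Your proposal follows the same route as the paper: Lemma \ref{key} turns $\ker T_\phi$ into the holomorphic $A^2$-solutions of the Fuchsian equation $D_\phi y=0$, the real work is the indicial equation at $w$, and the operator-theoretic conclusions come from Fuchs' theorem plus the index-zero argument. Your identity
$$n!+\sum_{i=1}^{n}\frac{(n!)^2}{(i!)^2(n-i)!}\,\lambda(\lambda-1)\cdots(\lambda-i+1)=\prod_{j=1}^{n}(\lambda+j)$$
is correct, and it is a genuine improvement on the paper's treatment (which only proves an inequality between the two sides): it factors the displayed polynomial as $\prod_{i=2}^{n}(\lambda+i)\bigl[(\lambda+n+1)+\beta(\lambda+1)\bigr]$ and makes the exponent structure and the ``in particular'' clauses transparent.

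The genuine gap is the one step you assert instead of computing: that collecting the order-$(z-w)^{\lambda}$ terms ``identifies the indicial equation with the displayed polynomial.'' It does not. In your own normal form $\prod_{i=2}^{n+1}(zD+i)=z^{-1}D^{n}z^{n+1}$, the $\psi$-part of $D_\phi$ is $z^{-1}D^{n}(z-w)^{n}\bigl[z^{n+1}\psi(z)\bigr]$, whose essential part at $w$ carries the weight $w^{-1}\cdot w^{n+1}\psi(w)=w^{n}\psi(w)$, while the antianalytic part carries $(-w^{-1})^{n}$; after clearing $(-w)^{n}$, the coefficient multiplying $\prod_{j=1}^{n}(\lambda+j)$ is $\psi(w)(-1)^{n}w^{2n}$, not $\psi(w)(-w)^{n}$. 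You can check this at $n=1$ directly from the Sundberg--Zheng formula: with $h=(z-w)\psi-w^{-1}$ one has $1+zh=(z-w)\bigl(z\psi(z)-w^{-1}\bigr)$, and the indicial equation of $(1+zh)y'+(2h+zh')y=0$ at $w$ is
$$\lambda\bigl(w\psi(w)-w^{-1}\bigr)+w\psi(w)-2w^{-1}=0,\qquad\text{i.e.}\qquad(\lambda+2)-w^{2}\psi(w)(\lambda+1)=0,$$
with $w^{2}\psi(w)$, not $w\psi(w)$. The same missing factor of $w^{n}$ appears in your use of the hypothesis: by your own (correct) formula $\tilde\phi=(z-w)^{n}\bigl(z^{n}\psi(z)+(-1)^{n}w^{-n}\bigr)$, the remaining singular points are the zeros of $1+(-wz)^{n}\psi(z)$, not of $1+(-z)^{n}\psi(z)$, so the stated nonvanishing hypothesis does not keep them off $\overline{\mathbb{D}}$. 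You are in good company: the paper's own proof drops exactly the same $w^{n}$ when it passes from the essential part of $(zD)^{n}(z-w)^{n}$, i.e.\ of $w^{n}D^{n}(z-w)^{n}$, to ``$\psi(w)\sum_{i}\frac{n!^{2}}{i!^{2}(n-i)!}D^{i}$.'' But the discrepancy is not cosmetic. Take $n=1$, $w=\tfrac{1}{2}$, $\psi(z)=2(1-e^{2\pi iz})/z$: then $\psi(\tfrac{1}{2})=8$, $1+zh=(1-2z)e^{2\pi iz}$, and $2h+zh'$ vanishes at $\tfrac{1}{2}$, so $y'/y=-(2h+zh')/(1+zh)$ is entire and $T_\phi$ has a bounded, nowhere-vanishing kernel function in $A^2(\mathbb{D})$; yet the displayed polynomial equals $-3\lambda-2$, which has no roots in $\mathbb{Z}_{\geq 0}$, so your argument (and the theorem as stated) would wrongly assert injectivity. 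With the corrected data $\beta=\psi(w)(-1)^{n}w^{2n}$ and the hypothesis $1+(-wz)^{n}\psi(z)\neq 0$ on $\overline{\mathbb{D}}$, the rest of your proposal (maximal exponent $\lambda^{\ast}$, Fuchs' theorem, index zero) goes through verbatim; as a proof of the statement as displayed, however, the central computation fails, and no repair can succeed, because that statement is false.
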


Finally, let us recall the following version of  a question of Douglas about invertibility
of Toeplitz operators for harmonic symbols in the Bergman space setting.

\begin{Question}
Let $\phi(z)\in C(\overline{\mathbb{D}})$ be a nowhere vanishing harmonic function in a neighbourhood of $\overline{\mathbb{D}},$
then is $T_{\phi}$ invertible? More generally, is the spectrum of $T_{\phi}$ a subset of $\overline{\phi(\mathbb{D})}?$

\end{Question}
Remark that while given a nowhere vanishing harmonic $\phi$ as above, then the winding number of
 $\phi|_{\partial D}$ around 0 is 0 (hence $T_{\phi}$ has index 0), inverse of this statement is certainly
 not true: There are examples of vanishing harmonic functions on $\mathbb{D}$ with winding number on the boundary
 around 0 being 0 (see \cite{ZZ}). To the best of our knowledge there are no negative answers
 known in the existing literature to the above version of Douglas's question.
We will show in Corollary \ref{example} that Douglas's question has an affirmative answer for harmonic 
polynomials of the form $\phi=\bar{z}+f(z)$, where $f(z)$ is a quadratic
polynomial.  Note however that  even for quadratic
$f(z)$ $\phi(\mathbb{D})$  need not  be a subset of the spectrum of $T_{\phi}$ as shown in [\cite{ZZ}, Theorem 4.1].
For a linear $f(z)$ the spectrum of $T_{\phi}$ does equal to $\phi(\overline{\mathbb{D}})$ [\cite{ZZ}, Theorem 3.1]. 

\section{The differential operator $D_{\phi}$}

Recall the following well known formula 
$$T_{\bar{z}^k}(z^n)=
 \begin{cases} 
      0 & k>n \\
      \frac{n-k+1}{n+1}z^{n-k} & n\geq k. \\
      
   \end{cases}
$$

This easily implies that for any polynomial $f\in \mathbb{C}[z]$, we have
$$T_{\bar{z}^k}(f)=\prod_{i=2}^{k+1}(zD+i)^{-1}D^k(f),$$
Where $D: H(\mathbb{D})\to H(\mathbb{D})$ denotes the differentiation operator, and
$zD+i: H(\mathbb{D})\to H(\mathbb{D}), i>0$ are understood as invertible differential operators.

In particular, $T_{\bar{z}}=(zD+2)^{-1}D$ which is equivalent
to the following formula from Sundberg-Zheng \cite{SZ}
$$T_{\bar{z}}f=\frac{1}{z^2}\int_0^zwf'(w)dw.$$

\begin{proof}[Proof of Lemma \ref{key}]
Since $\Lambda=\prod_{i=2}^{n+1}(zD+i):H(\mathbb{D})\to H(\mathbb{D})$ is an injective linear operator,
it suffices to check that $\Lambda(T_h)(f)=\Lambda(g).$ Hence we need to show that
$$\Lambda(T_{a_i\bar z^i})(f)=\prod_{k=i+2}^{n+1}(zD+i)D^i(a_i(z)f)$$
It suffices to check this equality for $f=z^m, m\geq 0.$ But this is immediate from the above
discussion.

\end{proof}



Next we will compute the first two leading terms of $D_{\phi},$ i.e. coefficients in front of $D^n, D^{n-1}.$
Clearly the leading term of $D_{\phi}$ is $\tilde{\phi}D^n=\sum_{i=0}^na_i(z)z^{n-i}D^n.$
Recall that the following commutator relation holds in the ring of differential operators
$$Dg(z)-g(z)D=g'(z),\quad g(z)\in H(D).$$
Using this relation we easily obtain the following expansion in terms of powers of $D$
$$\prod_{k=1}^m(zD+b_k)=z^mD^m+(m(m-1)/2+\sum b_k)z^{m-1}D^{m-1}+\cdots, b_k\in H(\mathbb{D}).$$
Thus 
$$\prod_{k=1}^m(zD+b_k)\psi=\psi z^mD^m+((m(m-1)/2+\sum b_k)z^{m-1}\psi+mz^m\psi')D^{m-1}+\cdots, \psi\in H(\mathbb{D}).$$
Our differential operator is $D_{\phi}$ is
$$\prod_{i=2}^{n+1}(zD+i)a_0(z)+D^na_n(z)+(zD+n+1)D^{n-1}a_{n-1}(z)+\cdots+\prod_{i=3}^{n+1}(zD+i)D a_1(z).$$
Therefore the coefficient in front of $D^{n-1}$ is 
$$n(n+1)a_0(z) z^{n-1}+nz^na_0'(z)+na_n'(z)+(a_{n-k-1}(z)\sum_{k<n-1}(n+1)(k+1)+na_{n-k}'(z))z^{k}.$$
Which may be written as
$$(n+1)(\sum_{k=0}^n a_kz^{n-k})'+n(\sum_{k=0}^n a_k'z^{n-k}).$$

\noindent Which is equal to 
$$(n+1)\tilde{\phi}'-\sum a_k'z^{n-k}=(n+1)\tilde{\phi}'-\widetilde{\frac{\partial\phi}{\partial z}}.$$
To summarize we have 
$$D_{\phi}=\tilde{\phi}D^n+((n+1)\tilde{\phi}'-\widetilde{\frac{\partial\phi}{\partial z}})D^{n-1}+\cdots.$$


 
 
 .





\begin{proof}[Proof of Theorems \ref{res}, \ref{order}]
By Lemma \ref{key}, we will need to analyse the dimension of the space of solutions
of the differential equation $D_{\phi}(y)=0$ for $y\in A^2(\mathbb{D}).$
Since it is an $n$-th order homogeneous equation, its space of solutions is at most n-dimensional,
thus $\dim\ker T_{\phi}\leq n$ for any $n$-th order polyanalytic function $\phi$.
Now suppose that $\phi \in H(\overline{\mathbb{D}})[\bar{z}]$ is of order $n$ such that
$\tilde{\phi}$ has no zeroes on $\overline{\mathbb{D}}.$
Thus the index of $T_{\phi}$ is $n$, on the other hand $\dim \ker T_{\phi}\leq n.$
Therefore $\dim \ker T_{\phi}=n$ and $\dim \coker T_{\phi}=0.$

The rest of the proof will proceed by observing that the differential equation $D_{\phi}(y)=0$  has regular singularity
at $w,$ and then applying the Frobenius method provides the corresponding indicial equation
in $\lambda$, where $\lambda$ is the smallest nonzero power of $(z-w)$ appearing in the Taylor expansion
of a nontrivial solution $y$ at $w.$ Indeed, recall that  an $n$-th order differential
equation $\sum a_i(z)y^{(n)}=0$ is said to have regular singularity at w if $(z-w)^{-i}a_i(z)$ is holomorphic in a neighbourhood
of $w.$ For such an equation let $a_i$ be the value of $(z-w)^{-i}a_i(z)$ at $w.$
We will refer to $\sum_i a_iD^i$ as the essential part at $w$ of the differential operator
$\sum_ia_i(z)D^i.$
Then the indicial equation of the above differential equation is $\sum_{i=0}^n\lambda\cdots(\lambda-i+1)a_i=0.$
Recall that the dimension of the space of holomorphic solutions
around $w$ equals to the number of distinct roots of the indicial equation in $\mathbb{Z}_{+}.$
Moreover, a classical theorem of Fuchs' \cite{H} asserts that if $a_i(z)$ are holomorphic in a neighbourhood of $\overline{\mathbb{D}}$
then for each such root $\lambda\in \mathbb{Z}_{+}$ there is a holomorphic solution around
$\overline{\mathbb{D}}$ with order of vanishing at $w$ equalling $\lambda.$ 

Now in the setting of Theorem \ref{res}, it follows that the equation $(z-w)^{n-1}D_{\phi}(y)=0$ has
a regular singularity at $w,$ and the corresponding indicial equation is
$$\lambda(\lambda-1)\cdots(\lambda-n+2)(\lambda-n+1+res_w\frac{(n+1)\tilde{\phi}'-\widetilde{\frac{\partial\phi}{\partial z}}}{\tilde{\phi}})=0,$$
which gives $\lambda=res_w({\tilde{\phi}^{-1}}\widetilde{\frac{\partial\phi}{\partial z}})-2$ and $\lambda=0,1,\cdots, n-2.$
Thus we are done by Fuchs' theorem.

Now we will show Theorem \ref{order}. We want to show that the differential operator $D_{\phi}$ for  $\phi(z)=(z-w)^n\psi(z)+(\bar{z}-w^{-1})^n$
has regular singularity at $w$, and then compute its essential part. Then we will obtain the desired indicial  equation
by evaluating the essential part on $(z-w)^{\lambda}$ and setting it to equal 0.
Recall that 
$$(zD+2)(T_{\bar{z}}-w^{-1})=-w^{-1}((z-w)D+2),\quad (zD+i)T_{\bar{z}}=T_{\bar{z}}(zD+i-1).$$
Hence $$(zD+i)(T_{\bar{z}}-w^{-1})=(T_{\bar{z}}-w^{-1})(zD+i-1)-w^{-1}.$$
Now we can obtain the following recursive equality
$$(zD+n+1)(T_{\bar{z}}-w^{-1})^n=(T_{\bar{z}}-w^{-1})(zD+n)(T_{\bar{z}}-w^{-1})^{n-1}-w^{-1}(T_{\bar{z}}-w^{-1})^{n-1}.$$
which yields 
$$(zD+n+1)(T_{\bar{z}}-w^{-1})^n=-(n-1)w^{-1}(T_{\bar{z}}-w^{-1})^{n-1}+(T_{\bar{z}}-w^{-1})^{n-1}(-w^{-1}((z-w)D+2))$$
Hence we have the following recursive formula  (put $L=((z-w)D+2)$ for brevity)
$$\prod_{i=2}^{n+1}(zD+i)(T_{\bar{z}}-w^{-1})^n=(-w^{-1})\prod_{i=2}^{n}(zD+i)(T_{\bar{z}}-w^{-1})^{n-1}(L+(n-1)).$$
Finally, we have
$$\prod_{i=2}^{n+1}(zD+i)(T_{\bar{z}}-w^{-1})^n=(-w^{-1})^n\prod_{i=1}^n (L+(i-1)).$$
The latter has regular singularity at $w$, and evaluated on $(z-w)^{\lambda}$ gives
$$(-1)^nw^{-n}\prod_{i=1}^n(\lambda+i+1).$$
Next we will compute the essential part at $w$ of the differential operator $\prod_{i=2}^{n+1}(zD+i)(z-w)^n\psi(z).$
Suffices to compute this for $\prod_{i=2}^{n+1}(zD+i)(z-w)^n.$
On the other hand the essential part at $w$ of differential operator $D^i(z-w)^n$ is $0$ unless $n=i.$
So the desired essential part is that of $(zD)^n(z-w)^n,$ which is equal to
the essential part of $w^nD^n(z-w)^n.$ Now recall a well-known equality in the algebra of differential operators
$$D^n (z-w)^n=(z-w)^nD^n+\sum_{i=0}^{n-1}\frac{n!^2}{i!^2(n-i)!}(z-w)^iD^i$$
Hence the essential part is $\psi(w)\sum_{i=0}^n\frac{n!^2}{i!^2(n-i)!}D^i.$
Finally, we obtain the sought after indicial equation is 
$$\prod_{i=1}^n(\lambda+i+1)+\psi(w)(-w)^n\sum_{i=0}^n\frac{n!^2}{i!^2(n-i)!}\lambda\cdots(\lambda-i+1)=0.$$
Now the desired result follows by Fuchs' theorem just as in the proof of Theorem \ref{res}.

Finally, it suffices to see that the above equation has no solutions in $\mathbb{N}$ for $(-w)^n\psi(w)\notin \mathbb{Q}_{<-1}.$
The letter follows from an easy fact that 
$$\prod_{i=1}^n(\lambda+i+1)>\sum_{i=0}^n\frac{n!^2}{i!^2(n-i)!}\lambda\cdots(\lambda-i+1)$$
for all $n, \lambda\in \mathbb{N}.$


\end{proof}

\begin{remark}
Let $\phi$ as above be such that the winding number around 0 of $\phi(\partial \mathbb{D})$ is 0.
Then for such generic $\phi$, the corresponding Toeplitz operator is invertible. 
Let $w_1,\cdots, w_n$ be zeros of $\tilde{\phi}$ on $\mathbb{D}.$ 
It follows that $T_{\phi}$ is not invertible if and only if
equation $D_{\phi}f=0$ has a nontrivial solution in $A^2(\mathbb{D}).$ 
Let $y'=Ay$ be the matrix form of this equation.
For generic such $\phi$ it follows that this equation
has regular singularities at $w_i.$ 
Put $A_i=res_{w_i}A.$ Thus generically, distinct eigenvalues of $A_i$ do not differ by integers
Let $ M_1,\cdots, M_n$ be the monodromy matrices around $w_1,\cdots, w_n$ respectively.
Then $M_i$ is conjugate to $\exp(2\pi iA_i)$, hence it has an eigenvalue 1 with multiplicity $n-1.$
Then existence of such a solution implies that matrices $M_1,\cdots, M_n$ have a simultaneous eigenvector
with eigenvalue 1. But generically this does not hold.

\end{remark}




Let us  explicitly write down the differential equation $D_{\phi}y=0$ of $n=2.$ For computational simplicity we will
consider harmonic functions 
$$\phi=a_2\bar{z}^2+a_1\bar{z}+a_0(z),\quad a_1, a_2\in \mathbb{C}, a_0(z)\in H(D).$$
So $\tilde{\phi}=a_2+a_1z+a_0z^2.$ 
 The corresponding differential equations is
$$(6+z^2D^2+6zD)(a_0 y)+a_2(D)^2y+a_1(zD+3)D y=0,$$
which simplifies to
$$\tilde{\phi}y''+(3(\tilde{\phi})'-z^2a_0')y'+(3(\tilde{\phi})''-2z^2a_0'')y=0.$$

  We will end  by explicitly working out invertibility criteria for $T_{\phi}$  for certain relatively simple
 harmonic functions $\phi$. 
 

\begin{cor}\label{example}

Let $\phi(z)=az^n(z-w)^m+(\bar{z}-w^{-1})^m$ with $a\in\mathbb{C}, w\in \mathbb{D}\setminus \lbrace 0\rbrace$ 
and $n,m \in\mathbb{N}.$ Then $T_{\phi}$ is invertible if and only if $|a|<|w|^{-n}.$
Let $\phi(z)=\bar{z}+a+bz+cz^2$ with $a,b, c\in\mathbb{C}.$ Then
the spectrum of $T_{\phi}$ is a subset of $\overline{\phi(\mathbb{D})}.$
\end{cor}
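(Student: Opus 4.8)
The plan is to treat the two assertions separately, deducing each from the machinery already developed. For the first statement, I would recognize that $\phi(z)=az^n(z-w)^m+(\bar{z}-w^{-1})^m$ is exactly of the form covered by Theorem \ref{order} after a relabeling: here the order of the polyanalytic part is $m$ (the power of $\bar z$), and the holomorphic factor multiplying $(z-w)^m$ is $\psi(z)=az^n$. Thus I would compute $\tilde\phi$ and verify the hypotheses of Theorem \ref{order}. The key quantity in the invertibility criterion is $(-w)^m\psi(w)=(-w)^m a w^n=a(-1)^m w^{n+m}$, and the relevant polynomial factor from Theorem \ref{order} governs when the indicial equation acquires a root in $\mathbb{Z}_{\geq 0}$. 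I would then invoke the clean consequence of Theorem \ref{order}: $T_\phi$ is invertible provided $(-z)^m\psi(z)=(-1)^m a z^{n+m}$ never equals $-1$ on $\overline{\mathbb{D}}$ and $(-z)^m\psi(z)(\mathbb{D})$ is a starlike domain about $0$. Since $(-1)^m a z^{n+m}$ maps $\mathbb{D}$ to a disc of radius $|a|$ centered at $0$ (hence starlike about $0$), the only obstruction is whether $-1$ lies in the closure of its image on $\overline{\mathbb{D}}$, i.e. whether $|a|\geq |w|^{-\ldots}$; here I would track exponents carefully to extract precisely the condition $|a|<|w|^{-n}$. The \emph{only if} direction requires showing that when $|a|\geq|w|^{-n}$ the indicial equation does produce a nonnegative integer root (equivalently $(-1)^m a z^{n+m}=-1$ becomes solvable on $\overline{\mathbb{D}}$, placing $0$ in the spectrum), which I would get from the sharp count of $\dim\ker T_\phi$ in terms of distinct roots in $\mathbb{Z}_+$ provided by the same theorem.

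For the second statement, concerning $\phi(z)=\bar z+a+bz+cz^2$, the goal is to show $\operatorname{spec}(T_\phi)\subseteq\overline{\phi(\mathbb{D})}$. The natural approach is to take $\mu\notin\overline{\phi(\mathbb{D})}$ and show $T_\phi-\mu=T_{\phi-\mu}$ is invertible. Writing $\phi-\mu=\bar z+(a-\mu)+bz+cz^2$, this is again a first-order polyanalytic (harmonic) symbol with $n=1$, so $\tilde{\phi-\mu}(z)=1+(a-\mu)z+bz^2+cz^3$ after forming $\widetilde{(\cdot)}$ for order $n=1$; I would recompute this explicitly since the order here is $1$, not matching the $(\bar z-w^{-1})^n$ template of Theorem \ref{order}. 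The relevant tool is the $n=1$ case, i.e. the Sundberg--Zheng residue criterion recalled in the introduction together with Theorem \ref{res}: $T_{\phi-\mu}$ is invertible iff the indicial residue data at the zeros of $\widetilde{\phi-\mu}$ in $\mathbb{D}$ avoids the bad integer set. I would argue that $\mu\notin\overline{\phi(\mathbb{D})}$ forces the winding number condition (index $0$) and rules out the residue obstruction.

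The main obstacle I anticipate is the \emph{only if} / spectral-containment sharpness, not the sufficiency. For the first part, proving invertibility when $|a|<|w|^{-n}$ is a direct application of the starlike criterion, but establishing \emph{non}-invertibility when $|a|\geq|w|^{-n}$ requires exhibiting an actual nonnegative integer root of the indicial equation, and the boundary case $|a|=|w|^{-n}$ (where $-1$ lies on $\partial$ of the image and $\tilde\phi$ may vanish on $\partial\mathbb{D}$) is delicate since $T_\phi$ could fail to be Fredholm there. For the second part, the genuine difficulty is analyzing the cubic $\widetilde{\phi-\mu}=1+(a-\mu)z+bz^2+cz^3$: I must control the location of its zeros relative to $\overline{\mathbb{D}}$ and the associated residues $\operatorname{res}_w(\widetilde{\partial_z(\phi-\mu)}\,\widetilde{\phi-\mu}^{-1})$ as functions of $\mu$, and show that the condition $\mu\notin\overline{\phi(\mathbb{D})}$ (via the relation $\phi(z)=z^{-1}\widetilde{\phi}(z)$ on $\partial\mathbb{D}$) precisely excludes the forbidden residue values in $\mathbb{Z}_{\geq 2}$ from Theorem \ref{res}. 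This residue computation for the quadratic $f(z)=a+bz+cz^2$ is where the bulk of the genuine work lies, and I expect it to require a careful argument-principle analysis of how the cubic's roots and residues vary as $\mu$ ranges outside $\overline{\phi(\mathbb{D})}$.
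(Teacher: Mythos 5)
Your proposal correctly identifies the cast of characters (Theorem \ref{order} with order $m$ and $\psi(z)=az^n$ for the first claim; the resolvent reduction $\mu\notin\overline{\phi(\mathbb{D})}$ plus the winding-number/index argument for the second), but at the decisive points it either invokes the wrong clause of Theorem \ref{order} or defers the argument entirely, so there are genuine gaps. For the first assertion, sufficiency: the clause you invoke requires $(-z)^m\psi(z)\neq -1$ on $\overline{\mathbb{D}}$, and with $\psi(z)=az^n$ this is exactly $|a|<1$ --- the condition contains no $w$ at all, so no amount of ``tracking exponents'' can convert it into $|a|<|w|^{-n}$, and the whole range $1\le |a|<|w|^{-n}$ (nonempty since $|w|<1$) is left unproved; this is precisely where the content of the corollary lies. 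The paper gets the $w$-dependence from a different place: the factorization $\tilde\phi=(-w^{-1})^m(z-w)^m\bigl(1+a(-w)^mz^{n+m}\bigr)$ shows, via Lemma \ref{Index}, that $T_\phi$ is Fredholm of index $0$ exactly below the threshold (incidentally this factorization gives the threshold $|a|<|w|^{-m}$; the $n$/$m$ mix-up is a typo already in the paper), and below the threshold the quantity that the \emph{injectivity} clause of Theorem \ref{order} actually tests, namely $\psi(w)(-w)^m=\pm a w^{n+m}$, has modulus $|a||w|^{n+m}<|w|^{\min(n,m)}<1$, hence lies outside $\mathbb{Q}_{<-1}$; injectivity plus index $0$ gives invertibility. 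Your ``only if'' mechanism is also wrong: above the threshold the $n+m$ extra zeros of $\tilde\phi$ enter $\mathbb{D}$, so $T_\phi$ has index $-(n+m)$ and fails to be surjective; there is no reason the indicial equation acquires a root in $\mathbb{Z}_{\geq 0}$, the kernel-count clause is inapplicable there (its hypothesis fails), and your parenthetical ``equivalently'' is false since solvability of $(-1)^maz^{n+m}=-1$ on $\overline{\mathbb{D}}$ is again the $w$-independent condition $|a|\geq 1$. The paper's necessity argument is purely index-theoretic, plus non-Fredholmness in the equality case.

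For the second assertion, your reduction to ``index $0$ implies invertible'' matches the paper, but the heart of the matter is exactly what you postpone as ``the bulk of the genuine work'': you offer no argument that the residue/indicial obstruction is absent, only the hope that an argument-principle analysis in $\mu$ will produce one. Worse, you explicitly turn away from the tool that closes this gap, saying the symbol does ``not match the $(\bar z-w^{-1})^n$ template of Theorem \ref{order}.'' It does match: if $w$ is the unique zero in $\mathbb{D}$ of $\widetilde{\phi-\mu}=1+z(a-\mu+bz+cz^2)$, then $a-\mu+bw+cw^2=-w^{-1}$, so $\phi-\mu=(\bar z-w^{-1})+(z-w)\psi(z)$ with $\psi$ a \emph{linear} polynomial. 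The paper then finishes with one further idea absent from your outline: $(-z)\psi(z)$ is a quadratic vanishing at $0$, the image of $\mathbb{D}$ under such a quadratic is starlike about $0$, and the final clause of Theorem \ref{order} (starlikeness ruling out the bad rational values $\psi(w)(-w)\in\mathbb{Q}_{<-1}$ of the indicial criterion) yields invertibility of $T_{\phi-\mu}$. Without this rewriting trick and the starlikeness of quadratic images, your treatment of the second half is a restatement of the problem rather than a proof.
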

\begin{proof}
If $|a|>1$ then the index of $T_{\phi}$ is nonzero. Indeed,
as $\tilde{\phi}=w^{-n}(z-w)^m(1+a(-w)^nz^{n+m}),$ it has $m$ roots in $\mathbb{D}$ (hence $T_{\phi}$ has index 0)
 if and only if  $|a|<|w|^{-n}.$
Now suppose that  $|a|<|w|^{-n}.$ Then $T_{\phi}$ is invertible by Theorem \ref{order}.

Finally let  $\phi=\bar{z}+a+bz+cz^2.$ 
It suffices to show that $T_{\phi}$ is invertible if  $T_{\phi}$ has index . Thus $\tilde{\phi}=1+z(a+bz+cz^2)$ has exactly
one zero $w\in \mathbb{D}.$ 
Then we may write  $\phi=-w^{-1}(\bar{z}-w^{-1})+(z-w)\psi$, where $\psi$ is a linear function.
Then $(-z)\psi(z)(\mathbb{D})$ is a starlike domain  around 0. Hence by Theorem \ref{order} $T_{\phi}$
is invertible.
\end{proof}





\begin{acknowledgement} 
I am  grateful to Z.Cuckovic and T.Le for their interest in this work
which led to significant improvements over the previous version of the paper.

\end{acknowledgement}

\end{document}